\newtheorem{theorem}{Theorem}[section]
\newtheorem{corollary}[theorem]{Corollary}
\newtheorem{lemma}[theorem]{Lemma}
\theoremstyle{definition}
\newtheorem{definition}[theorem]{Definition}
\newtheorem{example}[theorem]{Example}
\newcommand{\argmin}{\operatornamewithlimits{argmin}} 
\newcommand{\RR}{\mathbb{R}}
\newcommand{\Rn}{\mathbb{R}^n}
\newcommand{ \RntoR}{\mathbb{R}^n \rightarrow \mathbb{R}}
\newcommand{ \RtoR}{\mathbb{R} \rightarrow \mathbb{R}}
\newcommand{\Argmin}{\operatorname{Argmin}}
\newcommand{\be}{\begin{enumerate}}
\newcommand{\ee}{\end{enumerate}}
\newcommand{\bi}{\begin{itemize}}
\newcommand{\ei}{\end{itemize}}
\newcommand{\bc}{\begin{center}}
\newcommand{\ec}{\end{center}}
\newcommand{\beq}{\begin{equation}}
\newcommand{\eeq}{\end{equation}}
\newcommand{\bth}{\begin{theorem}}
\newcommand{\bcor}{\begin{corollary}}
\newcommand{\ecor}{\end{corollary}}
\newcommand{\bax}{\begin{axiom}}
\newcommand{\eax}{\end{axiom}}
\newcommand{\bdf}{\begin{definition}}
\newcommand{\edf}{\end{definition}}
\newcommand{\blem}{\begin{lemma}}
\newcommand{\elem}{\end{lemma}}
\newtheorem{exa}{Example}
\newcommand{\bex}{\begin{exa}}
\newcommand{\eex}{\end{exa}}
\def\eps{\varepsilon}
\def\els{\varepsilon_{ls}}
\def\eee {\eps_0,\eps_1,\els}
\newfont{\gothic}{eufm10}
\def\l{\lambda}
\def\lm{\lambda}
\def\cond{\mbox{cond }}
\newcommand{\rsquare}
\def\lr{\lambda_{(r)}}
\def\lr1{\hat\lambda_{(r)}}
\def\lm1{\hat\lambda_{(m-r)}}
\def\bee{\begin{equation}}
\def\eee{\end{equation}}
\def\proof{\noindent {\bf Proof}. \ }
\numberwithin{equation}{section}
\title[Complexity of the  Regularized Newton Method]{Complexity of  the  Regularized Newton Method}
\author[R. Polyak]{Roman A. Polyak}
\address[R. Polyak]{ Department of Mathematics\\
The Technion - Israel Institute of Technology\\
 32000 Haifa, Israel}
\email{{\tt rpolyak@techunix.technion.ac.il\and rpolyak@gmu.edu}}
\keywords{Regularized Newton's Method, Newton's Decrement, Newtoin's Area, Global Convergense, Quadratic Convergence Rate}
\subjclass[2010]{65K10 80M50 90C26}
\begin{document}

\begin{abstract}
 Newton's method for finding unconstrained minimizer of    strictly convex functions, generally speaking, does not converge from any starting point.

We introduce and study the damped regularized Newton's method (DRNM). It converges globally for any strictly convex function, which has a minimizer in $\Rn$. 

Locally DRNM converges with quadratic rate. We characterize the neighborhood of the minimizer, where the quadratic  rate occurs. Based on it we estimate the number of DRNM's steps required for finding an $\varepsilon$- approximation for the minimizer.
\end{abstract}

\maketitle


\section{Introduction}
 Newton's  method, which has been introduced  almost 350 years ago, is still  one of the  basic tools in numerical analysis, variational and control problems, optimization  both constrained and unconstrained, just to mention a few. 

It has  been used not only  as a numerical tool, but also as a  powerful instrument for proving  existence  and uniqueness results.

In particular, Newton-Kantorovich's  method plays a critical role in the classical KAM theory by   Kolmogorov, Arnold and  Mozer (see \cite{1}). Another example is the  proof of   Lusternik's theorem on tangent spaces (see  \cite{3}, \cite{9}).

Newton's method was  the main instrument in the interior point methods (IPMs), which preoccupied the field of optimization for a long time. 

Yu. Nesterov and A. Nemirovski shown that a special  damped Newton's method is particularly efficient for minimization   self - concordant (SC) functions (see \cite{6}, \cite{7}).

They shown that from any starting point  a special  damped Newton's step reduces the SC function value by a constant, which  depends only on the Newton's decrement. The decrement converges to zero.

 By the time it  gets small enough the damped Newton's method practically turns into Newton's method and  generates a sequence, which  converges in  value  with quadratic rate. 

They characterized the size of the minimizer's  neighborhood, where  quadratic rate  occurs. It allows establishing the complexity of the special damped Newton's method for SC function, that is to find the upper bound for the number  of damped Newton's step required for finding an $\varepsilon$- approximation for the minimizer.

For strictly  convex functions, which are not self-concordant, such results, to the best of our knowledge, are unknown. 

The purpose of the paper is to introduce and establish  complexity bounds of  the damped  Newton's  method (DNM) and DRNM   for  minimization of twice continuously differentiable  and strictly convex $f:\Rn\to\RR$.

First, we characterize the Newton's areas for DNM and DRNM. In other words, we estimate the minimizer's neighborhoods, where DNM and DRNM  converges with quadratic rate.

Then we estimate the number of steps needed for DNM's or DRNM's to enter the correspondent Newton's areas.

The key ingredients of our analysis are the   Newton's and  the regularized Newton's  decrements.

On the one hand, the decrements  provide the upper bound  for the distance from the current approximation to the minimizer. Therefore they have been  used in the stopping criteria. 

On the other hand, they provide a lower bound for the   function reduction at each step  at any point, which does  not  belong  to the Newton's or to the regularized Newton's area.

These  bounds were  used to estimate the number of DNM or DRNM steps  needed to get into  the correspondent Newton's areas.

 \section{Newton's Method}
 We start with the classical Newton's method for finding a root of a nonlinear equation
 $$ f(t)=0,$$
 where $f:\RtoR$ has a smooth derivative $ f'$.
 
 Let us  consider $t_{0} \in \RR$   and the linear approximation
 $$\widetilde{f}(t)= f(t_{0})+f'(t_{0})(t-t_{0})=f(t_{0})+f'(t_{0})\Delta t$$
  of $f$ at $  t_{0}$, assuming that $ f'(t_{0}) \neq 0$.
  
 By replacing $f$ with its linear approximation we obtain the following equation  $$ f(t_{0})+f'(t_{0})\Delta t=0$$  for   the Newton's step
 $\Delta t.$\\
 The next approximation is given by formula 
 \begin{equation}
  t=t_{0}+\Delta t = t_{0} - (f'(t_{0}))^{-1} f(t_{0}).\label{Newt2}
 \end{equation}
 By reiterating (\ref{Newt2}) we obtain Newton's method 
 \begin{equation}
 t_{s+1}= t_{s}-(f'(t_{s}))^{-1}f(t_{s})\label{Newt3}
 \end{equation}
  for finding a root of a nonlinear equation  $f(t)=0.$
 
 Let $t^*$ be the root, that is $f(t^*)=0$. Also we assume  $f^{'}(t^*)\neq 0$ and  $f\in C^2$. We consider the expansion of $f$  at $t_s$ with the Lagrange remainder
 \begin{equation}\label{Eq10_6.7.15}
 0=f(t^*)=f(t_s)+f^{'}(t_s)(t^*-t_s)+\frac{1}{2}f^{''}(\hat{t}_s)(t^*-t_s)^2,
 \end{equation}
 where $\hat{t}_s\in[t_s,t^*]$. For $t_s$ close to $t^*$ we have $f^{'}(t_s)\neq 0$, therefore from (\ref{Eq10_6.7.15})  follows 
 $$t^*-t_s+\frac{f(t_s)}{f^{'}(t_s)}=-\frac{1}{2}\frac{f^{''}(\hat{t}_s)}{f^{'}(t_s)}(t^*-t_s)^2.$$
 Using  (\ref{Newt3}) we get 
 \begin{equation}\label{Eq11_6.7.15}
 |t^*-t_{s+1}|=\frac{1}{2}\frac{|f^{''}(\hat{t}_s)|}{|f^{'}(t_s)|}|t^*-t_s|^2.
 \end{equation}
 If $\Delta_s=|t^*-t_s|$ is small, then there exist $a>0$ and $b>0$ independent on $t_s$  that $|f^{''}(\hat{t}_s)|\leq a$ and $|f^{'}(t_s)|>b$.
 Therefore, from (\ref{Eq11_6.7.15}) follows 
 \begin{equation}\label{Eq12_6.7.15}
 \Delta_{s+1}\leq c\Delta^2_s,
 \end{equation}
 where $c=0.5ab^{-1}$.

This is the key characteristic of  Newton's method, which makes the method so important even 350 years after it was originally introduced.
 
 Newton's method has a natural extension for a  nonlinear system of equations
 \begin{equation}
 g(x)=0,\label{Newt4}
 \end{equation}
 where $g:\mathbb{R}^n  \rightarrow \mathbb{R}^n$ is a vector-function with a smooth Jacobian $J(g)=\nabla g: \mathbb{R}^n  \rightarrow \mathbb{R}^n$. The linear approximation of $g$ at $x_{0}$ is given by
 \begin{equation}
 \widetilde{g}(x)=g(x_{0})+\nabla g(x_{0})(x-x_{0}). \label{new5}
 \end{equation}
 We replace $g$ in \eqref{Newt4} by its linear approximation  (\ref{new5}). The Newton's step $ \Delta x$ one  finds by solving the following linear system :
 $$g(x_{0}) +\nabla g(x_{0}) \Delta x=0.$$
 Assuming $\det\nabla g (x_{0}) \neq 0 $ we obtain 
 $$ \Delta x=-(\nabla g(x_0))^{-1}g(x_0).$$
 The new approximation is given by the following formula:
 \begin{equation}
 x=x_0-(\nabla g(x_{0}))^{-1}g(x_{0}).\label{Newt6}
 \end{equation}
 By reiterating (\ref{Newt6}) we obtain   Newton's method 
 \begin{equation}
 x_{s+1}=x_{s}- (\nabla g (x_{s}))^{-1}g(x_{s})\label{Newt7}
 \end{equation}
 for solving a nonlinear system of equations (\ref{Newt4}).
 
 Newton's  method for minimization of $f:\RntoR$ follows directly from (\ref{Newt7}) if 
  instead of unconstrained minimization problem
 \begin{equation}
 \begin{split}
 &\min f(x)\\\label{Newt8}
 &\mbox {s.t. }x\in\Rn 
 \end{split}
 \end{equation}
we consider  the  nonlinear system
 \begin{equation}
 \nabla f(x)=0,\label{Newt9}
 \end{equation}
 which  is the necessary  and sufficient condition  for $x^*$ to be the minimizer in \eqref{Newt8} in case of convex $f$.
 
Vector
 \begin{equation}\label{Eq6_18.9.15}
 n(x)=-(\nabla^2 f(x))^{-1}\nabla f(x)
 \end{equation}
defines  the Newton's direction at $x\in\Rn$.
 
 Application  of  Newton's method (\ref{Newt7}) to the  system (\ref{Newt9}) leads to  the Newton's method 
 \begin{equation}
 x_{s+1}= x_s-(\nabla ^2 f(x_s))^{-1}\nabla f(x_s)=x_s+n(x_s)\label{Newt10}
 \end{equation} 
 for solving (\ref{Newt8}).\\
 Method (\ref{Newt10}) has another interpretation. Let $f: \RntoR$ be twice  differentiable with a positive definite Hessian $\nabla^2 f$.
 
 The quadratic approximation of $f$  at $x_0$ is given by the  formula
 $$\widetilde{f}(x)= f(x_0)+(\nabla f(x_0), x-x_0)+\frac{1}{2} (\nabla ^2 f(x_0)(x-x_0), x-x_0).$$
 Instead  of solving (\ref{Newt8}) let us  find
 $$\bar{x}=\argmin \{ \widetilde{f} (x) \, : x \in \Rn\},$$
 which is equivalent to solving the  following  linear system 
 $$\nabla ^2 f(x_0) \Delta x= - \nabla f(x_0)$$
 for $\Delta x= x-x_0$.\\
We obtain
 $$ \Delta x= n(x_0),$$
 so for the next approximation we have 
 \begin{equation}
 \bar{x}=x_0- (\nabla^2 f(x_0))^{-1} \nabla f(x_0)=x_0+n(x_0) .\label{Newt11}
 \end{equation}
 By reiterating (\ref{Newt11})  we obtain  Newton's   method (\ref{Newt10}) for solving (\ref{Newt8}). 
 
 The local quadratic convergence of both \eqref{Newt7} and \eqref{Newt10} is well known (see \cite{2}, \cite{4}, \cite{7}, \cite{8} and references therein). 
 
 Away from the neighborhood of $x^*$, however, both Newton's methods   \eqref{Newt7} and \eqref{Newt10}  can either oscillate or  diverge.
 \begin{example}
 Consider
 \begin{eqnarray*}
g(t)= \left\{ 
 		\begin{array}{ll}
 		-(t-1)^2+1,\;  & t \geq 0, \\
 		 \;\;\;(t+1)^2-1,\; & t<0.
 		\end{array}
 		 \right.
 \end{eqnarray*}
 \end{example}
 The function $g$ together with $g'$  is continuous on $ (-\infty , \infty ).$   Newton's  method (\ref{Newt3}) converges to the root $t^*=0$ from  any starting point $ t$: $ |t| <\frac{2}{3}$,  oscillates between $t_s=-\frac{2}{3}$ and $t_{s+1}=\frac{2}{3}$, $s=1,2,...$  and  diverges for any  $t$: $ |t| >\frac{2}{3}$.

 \begin{example}\label{ExmNewt2}
 For $ f(t)=\sqrt{1+t^2}$ we have $$f(t^*)= f(0)=\min\{f(t)\, : \, -\infty <t < \infty \}.$$ 
 For the first and second derivative  we have 
 $$f'(t)=t(1+t^2) ^{-\frac{1}{2}}, \,\,\,  f''(t)=(1+t^2)^{-\frac{3}{2}}.$$
 \end{example}
 
 Therefore  Newton's method \eqref{Newt10} is given by the following formula
 \begin{equation}
 t_{s+1}=t_{s}-(1+t_s^2)^{\frac{3}{2}}t_s(1+t_s^2)^{-\frac{1}{2}}= -t_s^3. \label{Newt12}
 \end{equation}
 It follows from (\ref{Newt12}) that   Newton's  method converges from any $t_0 \in (-1,1)$  oscillates between $t_s=-1$ and $t_{s+1}=1$, $s=1,2,...$ and diverges from any $t_0 \notin  [-1,1]$. It also follows from (\ref{Newt12}) that  Newton's method converges from any starting point $t_0 \in (-1,1)$  with the cubic rate, however,  in both examples the convergence area is  negligibly smaller than the area where Newton's method diverges. Note that $f$ is strictly convex in $\RR$ and strongly convex in the neighborhood of $t^*=0$.
 
 Therefore there are three important issues associated with the Newton's  method for unconstrained convex optimization.\\
 First,  to characterize the neighborhood of the solution, where Newton's method converges with  quadratic rate.\\
 Second,   to find such modification of  Newton's method that  generates  convergent sequence  from any starting point and retains  quadratic convergence rate in the neighborhood of the solution.\\
 Third,  to estimate the computational complexity of a globally convergent Newton's and regularized Newton's  methods in terms of the total number of steps required for finding an  $\varepsilon$-approximation for $x^*$.
 
 \section{Local Quadratic Convergence of  Newton's Method}
 We consider a class of convex functions $ f: \RntoR$, that  are strongly convex at $x^*$, that is 
 \begin{equation}
 \label{Eq_4.2.16}
 \nabla^2f(x^*)\succeq mI,
 \end{equation}
  $m>0$ and their Hessian satisfy   Lipschitz condition in the neighborhood of $x^*$. In other words there is $\delta >0$, a ball $B(x^*,\delta)=\{x\in\Rn,\|x-x^*\|\leq\delta\}$ and $M>0$ such that for any $x$ and $y\in B(x^*,\delta)$ we have 
  \begin{equation}
  \label{Eq2_4.2.16}
  \|\nabla^2 f(x)-\nabla^2 f(y)\|\leq M\|x-y\|.
  \end{equation}
  
   The following Theorem characterize the  neighborhood of $x^*$, where  Newton's method converges with quadratic rate. 
   
   There are several ways to proof this fundamental result(see, for example, \cite{2}, \cite{4}, \cite{7}, \cite{8} and references therein). In the following Theorem, which we provide for completeness,  the Newton's area is characterized explicitly through the convexity constant $m>0$ and Lipschitz constant $M>0$ (see \cite{7}). We will use these technique later to characterize the regularized Newton's area.
 \begin{theorem} \label{TNewt1}
 If for $0<m<M$ conditions \eqref{Eq_4.2.16} and \eqref{Eq2_4.2.16} are satisfied,  then for  $\delta=\frac{2m}{3M}$ and any given $x_0 \in B (x^*, \delta)$  the entire sequence $\{x_s\}_{s=0}^{\infty}$ generated by (\ref{Newt10}) belongs $B(x^*, \delta)$ and the following bound holds:
 \begin{equation}
 \|x_{s+1} -x^* \| \leq \frac{M}{2(m-M\|x_s-x^*\|)}\|x_s-x^*\|^2, \, s\geq 1 .\label{Newt15}
 \end{equation}
 
 \end{theorem}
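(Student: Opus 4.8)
The plan is to track the error vector $e_s=x_s-x^*$ and to use the two hypotheses for two separate purposes: the Lipschitz condition \eqref{Eq2_4.2.16} will control the numerator of \eqref{Newt15}, while the strong convexity \eqref{Eq_4.2.16} together with \eqref{Eq2_4.2.16} will bound the norm of the inverse Hessian appearing in the denominator.

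First I would convert the Newton step \eqref{Newt10} into an exact integral identity for the error. Since $\nabla f(x^*)=0$, the fundamental theorem of calculus yields
$$\nabla f(x_s)=\nabla f(x_s)-\nabla f(x^*)=\int_0^1\nabla^2 f(x^*+t(x_s-x^*))(x_s-x^*)\,dt.$$
Inserting this into \eqref{Newt10} and factoring $(\nabla^2 f(x_s))^{-1}$ to the left gives
$$x_{s+1}-x^*=(\nabla^2 f(x_s))^{-1}\int_0^1\left[\nabla^2 f(x_s)-\nabla^2 f(x^*+t(x_s-x^*))\right](x_s-x^*)\,dt.$$

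Next I would estimate the two factors. By \eqref{Eq2_4.2.16} the integrand satisfies $\|\nabla^2 f(x_s)-\nabla^2 f(x^*+t(x_s-x^*))\|\leq M(1-t)\|x_s-x^*\|$, and since $\int_0^1(1-t)\,dt=\frac{1}{2}$ the integral contributes the factor $\frac{M}{2}\|x_s-x^*\|^2$. For the inverse Hessian, combining \eqref{Eq_4.2.16} with \eqref{Eq2_4.2.16} shows $\nabla^2 f(x_s)\succeq(m-M\|x_s-x^*\|)I$, so that $\|(\nabla^2 f(x_s))^{-1}\|\leq(m-M\|x_s-x^*\|)^{-1}$ as long as $\|x_s-x^*\|<m/M$. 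Multiplying these two estimates produces exactly \eqref{Newt15}.

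Finally I would show by induction that the iterates remain in $B(x^*,\delta)$ for $\delta=\frac{2m}{3M}$. If $\|x_s-x^*\|\leq\delta$, then $m-M\|x_s-x^*\|\geq\frac{m}{3}$, so \eqref{Newt15} gives $\|x_{s+1}-x^*\|\leq\frac{3M}{2m}\|x_s-x^*\|^2\leq\|x_s-x^*\|\leq\delta$, where the middle step uses $\frac{3M}{2m}\delta=1$. This simultaneously keeps the sequence in the ball and forces the errors to decrease monotonically. The step I expect to be most delicate is the spectral lower bound on $\nabla^2 f(x_s)$: it rests on the fact that for symmetric matrices the smallest eigenvalue is perturbed by at most the operator norm of the perturbation (Weyl's inequality applied to \eqref{Eq2_4.2.16}), which is precisely what guarantees that the Hessian stays positive definite, hence invertible, throughout the ball, and that the denominator in \eqref{Newt15} stays positive. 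The particular choice $\delta=\frac{2m}{3M}$ is exactly the value that makes the one-step contraction self-sustaining.
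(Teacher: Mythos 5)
Your proposal is correct and takes essentially the same route as the paper's own proof: the identical integral identity $x_{s+1}-x^*=(\nabla^2 f(x_s))^{-1}H_s(x_s-x^*)$, the same Lipschitz estimate $\|H_s\|\leq\frac{M}{2}\|x_s-x^*\|$ from \eqref{Eq2_4.2.16}, and the same eigenvalue-perturbation bound $\nabla^2 f(x_s)\succeq(m-M\|x_s-x^*\|)I$ yielding $\|(\nabla^2 f(x_s))^{-1}\|\leq(m-M\|x_s-x^*\|)^{-1}$. Your explicit induction showing that $\delta=\frac{2m}{3M}$ makes the contraction self-sustaining is merely a slightly more formal rendering of the paper's closing observation that $\Delta_{s+1}<\Delta_s$ whenever $\Delta_s<\frac{2m}{3M}$.
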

 
 \begin{proof}
 From  (\ref{Newt10}) and $ \nabla  f(x^*)=0$ follows 
 \begin{eqnarray} \label{Newt16}
 x_{s+1}-x^*=x_s-x^*-[\nabla^2 f(x_s)]^{-1} \nabla f(x_s)=\nonumber\\
 =x_s-x^* - (\nabla^2 f(x_s))^{-1} (\nabla f(x_s)-\nabla f(x^*))= \nonumber\\
 =[\nabla ^2 f(x_s)] ^{-1}[\nabla^2 f(x_s)(x_s-x^*)- (\nabla f(x_s)-\nabla f(x^*))].
 \end{eqnarray}
 
 Then we have 
 $$\nabla f(x_s) -\nabla f(x^*)= \int_{0}^{1}\nabla^2 f (x^*+\tau(x_s-x^*))(x_s-x^*)d\tau.$$
 From (\ref{Newt16}) we obtain
 \begin{equation}
 x_{s+1}-x^* =[\nabla^2 f(x_s)]^{-1}H_s (x_s-x^*), \label{Newt17}
 \end{equation}
 where
 $$H_s=\int_0^1[\nabla ^2 f(x_s)-\nabla^2 f(x^* +\tau (x_s-x^*))]d\tau.$$
 Let $\Delta _s =\|x_s -x^*\|$, then using (\ref{Eq2_4.2.16}) we get
 \begin{eqnarray*}
 \|H_s\|=\| \int_0^1[\nabla^2 f(x_s)-\nabla^2 f(x^* +\tau (x_s-x^*))]d\tau \| \\
 \leq \int_0^1\|[\nabla^2 f(x_s)-\nabla^2 f(x^* +\tau (x_s-x^*))\|d\tau \leq \\
 \leq \int_0^1 M\|x_s-x^* -\tau(x_s-x^*)\|d\tau \leq\\
 \leq \int_0^1 M (1-\tau)\|x_s-x^* \|d \tau=\frac{M}{2}\Delta_s.
 \end{eqnarray*}
  Therefore from (\ref{Newt17}) and the latter bound we have

$$ \Delta_{s+1} \leq \|(\nabla^2 f(x_s))^{-1}\| \|H_s\| \|x_s-x^*\| \leq$$
  \begin{eqnarray}
  \frac{M}{2}\|(\nabla ^2 f(x_s))^{-1}\|\Delta^2_s. \label{Newt18}
 \end{eqnarray}
 From (\ref{Eq2_4.2.16}) follows
 $$\|\nabla^2 f(x_s)- \nabla^2f(x^*)\| \leq M\|x_s-x^*\|=M\Delta_s,$$
 therefore   $$\nabla ^2 f(x^*) +M\Delta_sI \succeq \nabla ^2 f(x_s)\succeq \nabla ^2 f(x^*)- M\Delta_sI .$$
From \eqref{Eq_4.2.16} follows
 $$\nabla^2 f(x_s) \succeq \nabla ^2 f(x^*)-M\Delta_sI \succeq (m-M\Delta_s)I.$$
 Hence, for any $ \Delta_s <m M^{-1}$ the matrix $\nabla ^2 f(x_s) $ is positive definite, therefore the  inverse $(\nabla^2 f(x^s))^{-1}$ exists and the following bound holds  
 $$\|(\nabla^2 f(x^s))^{-1}\| \leq \frac{1}{m-M\Delta_s}.$$
 From (\ref{Newt18}) and  the latter bound  follows  
 \begin{equation}
 \Delta_{s+1} \leq \frac{M}{2(m-M\Delta_s)}\Delta^2_s .\label{Newt19}
 \end{equation}
From \eqref{Newt19}   for $\Delta _s < \frac{2m}{3M}$  follows $\Delta_{s+1} < \Delta_{s}$,
 which means that for $\delta = \frac{2m}{3M}$ and any $x_0 \in B(x^*, \delta)$ the entire sequence $\{x_s\}_{s=0}^\infty $ belongs to $ B(x^*, \delta)$ and converges to $x^*$ with the   quadratic rate \eqref{Newt19}.
  
  The proof is completed \qed
  \end{proof}

  The neighborhood $ B(x^*, \delta)$ with $ \delta = \frac{2m}{3M}$ is called   Newton's area.
 
 In the following section we consider a new version of the damped Newton's method, which converges  from any starting point and at the same time  retains  quadratic convergence rate in the Newton's area.
 
 \section{Damped Newton's Method }
 
 To make Newton's method practical we have to guarantee convergence from any starting point. To this end the step length $t>0$ is  attached to the Newton's direction $n(x)$, that is
 \begin{equation}\label{Eq_28.8.15}
 \hat{x}=x+tn(x)=x-t(\nabla^2 f(x))^{-1}\nabla f(x).
 \end{equation}
   The step length $t>0$ has to be adjusted to guarantee a  "substantial  reduction" of $f$  at each $x \notin B(x^*, \delta)$ and $t=1$, when $x \in B(x^*, \delta)$.\\
Method \eqref{Eq_28.8.15} is called the damped Newton's Method (DNM)(see, for example, \cite{2}, \cite{7},  \cite{9})
 
 The following function $\l:\Rn\to \RR_+$:
 \begin{equation}\label{Eq3_4.2.16}
 \l(x)=((\nabla^2f(x))^{-1}\nabla f(x),\nabla f(x))^{0.5}=[-(\nabla f(x),n(x))]^{0.5},
 \end{equation}
  which is called the Newton's  decrement of $f$ at $x\in\Rn$, will play an important role later. 
  
  At this point  we  assume that  $f:\Rn\to\RR$ is strongly convex  and its  Hessian $\nabla^2f$ is Lipschitz continuous, that is, there exist $\infty>M>m>0$ that 
   \begin{equation}
   \label{Eq_5.2.16}
   \nabla^2f(x)\succeq mI
   \end{equation} 
   and 
   \begin{equation}
   \label{Eq2_5.2.16}
   \|\nabla^2 f(x)-\nabla^2 f(y)\|\leq M\|x-y\|
   \end{equation} 
   are satisfied for any $x$ and $y$ from $\Rn$.
   
  Let $x_0\in\Rn$ be a starting point. 
  
      Due to \eqref{Eq_5.2.16} the sublevel set $\mathcal{L}_0=\{x\in\Rn:f(x)\leq f(x_0)\}$  is bounded for any given $x_0\in \Rn$. Therefore     from \eqref{Eq2_5.2.16} follows existence $L>0$ that 
   \begin{equation}
     \label{Eq3_5.2.16}
     \|\nabla^2 f(x)\|\leq L
     \end{equation} 
     is taking place.
   
  We also assume that $\varepsilon>0$ is small enough, in particular, 
  \begin{equation}\label{Eq_19.2.16}
  0<\varepsilon<  m^2L^{-1}
  \end{equation}
  holds.
 
 We are ready to describe our version of DNM.
 
 Let $x_0 \in \Rn$ be a starting point and $0<\varepsilon<\delta $ be the required accuracy.  Set $x:=x_0$
 \begin{enumerate}
 \item[1.] find Newton's direction $n(x)$;
 \item[2.] if the following inequality
\begin{equation}
f(x+n(x))\leq f(x)+0.5(\nabla f(x),n(x))
\end{equation}
 holds, then set $t(x):=1$, otherwise  set $t(x)=m(2L)^{-1}$;
 \item[3.] set $x:=x+t(x)n(x)$;
 \item[4.] if $\l(x)\leq\varepsilon^{1.5}$, then $x^*:=x$,  otherwise go 1.
 \end{enumerate}
 The following Theorem proves  global convergence of the DNM  1.-4. and establishes the upper bound for the total number of DNM steps require for finding $\varepsilon$-approximation for $x^*$.

 \section{Global Convergence of the DNM and its Complexity}

 \begin{theorem}\label{TNewt2}
 If $f:\RntoR$ is twice  differentiable and conditions (\ref{Eq_5.2.16}) and (\ref{Eq2_5.2.16}) are satisfied, then for $\delta=\frac{2}{3}\frac{m}{M}$ it takes
 \begin{equation}\label{Eq7_18.9.15}
 N_0=9\frac{L^2M^2}{m^5} (f(x_0)-f(x^*)).
 \end{equation}
 DNM steps to find $x\in B(x^*,\delta)$ by using DNM.
 \end{theorem}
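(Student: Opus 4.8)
The plan is to show that every DNM step taken from a point $x\notin B(x^*,\delta)$ decreases $f$ by at least a fixed positive amount, and then to divide the total available descent $f(x_0)-f(x^*)$ by that amount. Writing $x_s$ for the iterates and $\lambda_s=\lambda(x_s)$ for the Newton's decrement, I would first establish a uniform per-step reduction of the form $f(x_s)-f(x_{s+1})\geq \frac{m}{4L}\lambda_s^2$, valid whether the step is full ($t=1$) or damped ($t=m/(2L)$). Since $f$ is monotonically non-increasing along the iterates and bounded below by $f(x^*)$, summing this inequality over the steps taken before the sequence first enters $B(x^*,\delta)$ will bound their number.

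For the reduction estimate I would treat the two branches of the algorithm separately. In the full-step branch the acceptance test reads exactly $f(x_s+n(x_s))\leq f(x_s)+\tfrac12(\nabla f(x_s),n(x_s))=f(x_s)-\tfrac12\lambda_s^2$, so the reduction is at least $\tfrac12\lambda_s^2\geq \frac{m}{4L}\lambda_s^2$ using $m\leq L$. For the damped branch I would expand $f(x_s+t\,n(x_s))$ with a second-order Taylor term and bound the Hessian along the segment via $\nabla^2 f\preceq LI$ (equation \eqref{Eq3_5.2.16}), together with the identities $(\nabla f(x_s),n(x_s))=-\lambda_s^2$ and $(\nabla^2 f(x_s)n(x_s),n(x_s))=\lambda_s^2$; combined with $\nabla^2 f(x_s)\succeq mI$ the latter gives $\|n(x_s)\|^2\leq m^{-1}\lambda_s^2$. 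This yields
\[
f(x_s+t\,n(x_s))\leq f(x_s)-t\lambda_s^2+\frac{t^2 L}{2m}\lambda_s^2,
\]
and one checks that $t=m/(2L)$ satisfies the Armijo-type inequality $f(x_s+t\,n(x_s))\leq f(x_s)-\tfrac{t}{2}\lambda_s^2$ (it reduces to $t\leq m/L$), so the reduction is at least $\tfrac{t}{2}\lambda_s^2=\frac{m}{4L}\lambda_s^2$, as required.

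It remains to bound $\lambda_s^2$ from below whenever $x_s\notin B(x^*,\delta)$. Using $\nabla^2 f(x_s)\preceq LI$ one gets $\lambda_s^2=((\nabla^2 f(x_s))^{-1}\nabla f(x_s),\nabla f(x_s))\geq L^{-1}\|\nabla f(x_s)\|^2$, while strong convexity \eqref{Eq_5.2.16} and $\nabla f(x^*)=0$ give $\|\nabla f(x_s)\|\geq m\|x_s-x^*\|>m\delta=\frac{2m^2}{3M}$. Hence $\lambda_s^2>\frac{4m^4}{9M^2L}$, and the per-step reduction is strictly greater than $\frac{m}{4L}\cdot\frac{4m^4}{9M^2L}=\frac{m^5}{9M^2L^2}$. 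If $N$ is the number of steps before first reaching $B(x^*,\delta)$, telescoping gives $f(x_0)-f(x^*)\geq\sum_{s=0}^{N-1}(f(x_s)-f(x_{s+1}))>N\,\frac{m^5}{9M^2L^2}$, whence $N<9\frac{L^2M^2}{m^5}(f(x_0)-f(x^*))=N_0$.

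The main obstacle I anticipate is the damped-step decrease: the Taylor remainder must be controlled by $L$ along the \emph{entire} segment $[x_s,\,x_s+t\,n(x_s)]$, whereas \eqref{Eq3_5.2.16} only guarantees the bound on the sublevel set $\mathcal{L}_0$. So before invoking the quadratic estimate I would verify that each accepted step keeps the iterate in $\mathcal{L}_0$ and that the connecting segment stays there, so that the constant $L$ is legitimately available; once this is secured the remaining computations are routine.
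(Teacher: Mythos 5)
Your proposal is correct and follows essentially the same route as the paper's proof: a uniform per-step decrease $\Delta f(x)\geq \frac{m}{4L}\lambda^2(x)$ in both branches, the decrement lower bound $\lambda(x)\geq mL^{-1/2}\|x-x^*\|\geq mL^{-1/2}\delta$ with $\delta=\frac{2m}{3M}$ outside the Newton area, and telescoping to obtain $N_0=9\frac{L^2M^2}{m^5}\left(f(x_0)-f(x^*)\right)$. The only local difference is in the damped branch, where you verify the Armijo-type decrease at $t=m/(2L)$ directly via the descent lemma $\nabla^2 f\preceq LI$ (with the bootstrap you rightly flag to keep the segment in $\mathcal{L}_0$), while the paper instead uses monotonicity of $\varphi'(t)=(\nabla f(x+tn(x)),n(x))$ and a mean-value argument to show that the point where the directional derivative halves satisfies $t(x)\geq m/(2L)$ --- an equivalent computation in which your $\mathcal{L}_0$ concern is implicitly resolved because $f$ decreases monotonically on $[0,t(x)]$.
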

 \proof
 From \eqref{Eq3_5.2.16} follows
 \begin{equation}\label{Eq_2.3.16}
 \nabla^2 f(x)\preceq LI.
 \end{equation} 
  On other hand, from (\ref{Eq_5.2.16}) follows the existence of the inverse $(\nabla ^2 f(x))^{-1}$. Therefore from (\ref{Eq_2.3.16}) follows
  \begin{equation}\label{Eq2_2.3.16}
  (\nabla^2 f(x))^{-1}\succeq L^{-1}I.
  \end{equation}
  From (\ref{Eq3_4.2.16}) and (\ref{Eq2_2.3.16}) we obtain the following lower bound for the Newton's decrement
  \begin{equation}\label{Eq3_2.3.16}
  \lambda (x)=(\nabla^2 f(x)^{-1}\nabla f(x), \nabla f(x))^{0.5}\geq
  \end{equation}
  $$\geq (L^{-1}\|\nabla f(x)\|^2)^{0.5}=L^{-0.5}\|\nabla f(x)\|.$$
  From (\ref{Eq_5.2.16}) we have
  $$\|\nabla f(x)\|\|x-x^*\|\geq (\nabla f(x)-\nabla f(x^*),x-x^*)\geq m\|x-x^*\|^2$$
  or
  \begin{equation}\label{Eq4_2.3.16}
  \|\nabla f(x)\|\geq m\|x-x^*\|.
   \end{equation}
    From (\ref{Eq3_2.3.16}) and (\ref{Eq4_2.3.16}) we obtain
    \begin{equation}\label{Eq5_2.3.16}
    \lambda (x)\geq L^{-0.5}m\|x-x^*\|.
    \end{equation}
    From \eqref{Eq_19.2.16} and  the stopping criteria 4. follows 
    $$(m^2 L^{-1})^{0.5}\varepsilon\geq \varepsilon ^{1.5}\geq \lambda (x)\geq m L^{-0.5}\|x-x^*\|,$$
    or $\|x-x^*\|\leq \varepsilon$, which  justifies the stopping criteria 4.
    
 On the other hand,  Newton's decrement defines the lower bound for the function reduction at each step.
 
 In fact, for  Newton's directional derivative from \eqref{Eq6_18.9.15},  \eqref{Eq3_4.2.16} and \eqref{Eq_5.2.16} follows
$$ \varphi'(0)= \frac{df(x+tn(x))}{dt}|_{t=0}=(\nabla f(x), n(x))=$$
 \begin{equation}
-(\nabla ^2 f(x)n(x),n(x)) \leq -m\|n(x)\|^2 .\label{Newt30}
 \end{equation}
 
 Due to the strong convexity of $\varphi (t)=f(x+tn(x))$  the derivative   $\varphi ^{'}(t)=(\nabla f(x+tn(x)),n(x))$ is monotone  increasing in $t>0$, so there is $t(x)>0$ such that 
 \begin{equation}\label{Newt30b}
 0>(\nabla f(x+t(x)n(x)), n(x))\geq\frac{1}{2}(\nabla f(x),n(x)),
 \end{equation}
 otherwise $(\nabla f(x+tn(x)),n(x))<\frac{1}{2}(\nabla f(x),n(x))\leq -\frac{1}{2} m\|n(x)\|^2,t>0$ and  $\inf f(x)=-\infty$, which is impossible for a strongly convex function $f$.
 
 It follows from (\ref{Newt30}), (\ref{Newt30b}) and  monotonicity of $\varphi'(t)$ that for any  $t\in[0,t(x)]$ we have 
 $$\frac{df(x+tn(x))}{dt}=(\nabla f(x+tn(x)),n(x))\leq\frac{1}{2}(\nabla f(x), n(x)).$$
 Therefore 
 $$f(x+t(x)n(x)) \leq f(x) +\frac{1}{2} t(x)( \nabla f(x), n(x)). $$
Keeping in mind \eqref{Eq3_4.2.16} we obtain
 \begin{equation}
 f(x) -f( x+t(x)n(x))\geq \frac{1}{2} t(x)\lambda ^2 (x).  \label{Newt31}
 \end{equation}
 Combining  (\ref{Newt30}) and (\ref{Newt30b}) we obtain
 $$(\nabla f(x+t(x)n(x))-\nabla f(x), n(x)) \geq \frac{m}{2} \|n(x)\|^2.$$ 
 Therefore, there is $0<\theta (x) <1$ such that 
 $$t(x)(\nabla^2 f(x+\theta (x) t(x)n(x))n(x),n(x))=t(x)(\nabla^2f(\cdot)n(x),n(x))\geq \frac{m}{2} \|n(x) \|^2,$$
 or
 $$t(x) \|\nabla ^2 f(\cdot) \| \|n(x)\|^2 \geq \frac{m}{2} \|n(x)\|^2.$$
From \eqref{Eq_5.2.16} follows 
 \begin{equation}
 t(x) \geq \frac{m}{2L},  \label{Newt33}
 \end{equation}
 which justifies the choice of step length $t(x)$ in the DNM 1.-4..
 
 Hence, from (\ref{Newt31}) and \eqref{Newt33}   we obtain the following lower bound for the function reduction per step 
 \begin{equation}
 \Delta f(x) =f(x) -f(x +t(x)n(x))\geq \frac{m}{4L} \lambda ^2 (x) ,\label{Newt34}
 \end{equation}
 which together with the lower bound \eqref{Eq5_2.3.16} for the Newton's decrement $\lambda(x)$ leads to 
 \begin{equation}
 \Delta f(x) = f(x)-f(x+t(x)n(x) )\geq \frac{m^3}{4L^2} \|x-x^*\|^2 .\label{Newt35}
 \end{equation}
 It means that for any $x\notin B(x^*,\delta)$ the function reduction at each step is proportional to the square of the distance between current approximation $x$ and the  solution $x^*$. 
 
 In other words, "far from" the solution   Newton's step produces a  "substantial" reduction of the function  value similar to one of  the gradient method.
 
 For  $x\notin B(x^*, \delta)$ we have $\|x-x^*\| \geq \frac{2m}{3M}$, therefore from (\ref{Newt35}) we obtain
 $\Delta f(x) \geq \frac{1}{9} \frac{m^5}{L^2M^2}$. So it takes at most $$ N_0=9\frac{L^2M^2}{m^5}(f(x_0)-f(x^*))$$ Newton's steps to obtain  $x\in B(x^*,\delta)$ from a given starting point $ x_0\in \Rn$. 
  The proof is  completed. \qed
  
 From Theorem \ref{TNewt1} follows  that $O(\ln \ln \varepsilon ^{-1} ) $  steps needed to find an $\varepsilon$- approximation to $x^*$ from any $x\in B(x^*,\delta)$, where $0<\varepsilon< \delta$ is the required accuracy.
  Therefore the total number of Newton's steps required for finding an  $\varepsilon$-approximation  to the optimal solution $x^*$ from a  starting point $x_0 \in \Rn$ is 
  $$N=N_0+O(\ln\ln\varepsilon ^{-1}).$$

  The bound \eqref{Eq7_18.9.15} is similar to (9.40) from \cite{2}, but the proof is   based on our version of DNM and  the explicit characterization of the Newton's  area. It  allows to extend the proof for the regularized Newton's method \cite{10}. 
 
 The DNM  requires  an a priori knowledge of two parameters $m$ and $L$ or their corresponding lower and upper bounds.
 
 The following version of DNM is free from this  requirement. 
 To adjust  the  step length $t>0$ we use the backtracking line search.
 
 The  inequality 
 \begin{equation}
  f(x+tn(x)) \leq f(x) +\alpha t(\nabla f(x), n(x)) \label{Newt37}
 \end{equation}
 with $0<\alpha \leq 0.5$ 
 is called the Armijo condition.
 
 Let $0< \rho <1$, the backtracking line search consist of the following steps.
 \begin{enumerate}
 \item[1.] For $t >0$ check (\ref{Newt37}). If (\ref{Newt37}) holds go to 2. If not set $t:=t\rho$ and repeat it until  (\ref{Newt37}) holds, then go to 2. 
 \item[2.] set  $t(x):=t$, $x:=x+t(x)n(x)$
 \end{enumerate}
 We are ready to describe another version of DNM, which does not requires an a priori knowledge of the parameters $m$ and $L$ or their lower and upper bounds.
 
 Let $x_0 \in \Rn$ be a starting point and $0<\varepsilon<<\delta $ be the required accuracy. 
 \begin{enumerate}
 \item[1.] Compute Newton's direction
 \begin{equation}n(x) =-(\nabla ^2 f(x)) ^{-1} \nabla f(x);\end{equation}
 \item[2.] set $t:=1$, use the backtracking line search until
 $$f(x+tn(x)) \leq f(x) +0.5 t(\nabla f(x), n(x));$$
 \item[3.] set $t(x):=t$, $x:=x+t(x)n(x)$;
 \item[4.] if $\lambda (x) \leq \varepsilon^{1.5}$ then $x^*:= x$  otherwise 
 go 1.
 \end{enumerate}
 
The complexity of the DNM with backtracking line search can  be established using arguments similar to those in Theorem \ref{TNewt2}

 Unfortunately, in the absence of  strong convexity of $f:\Rn\to\RR$   Newton's method might not converge from any starting point. 
 
 In case of Example \ref{ExmNewt2}  Newton's method does not converge from any  $t\notin(-1,1)$ in spite of $f(t)=\sqrt{1+t^2}$ being strongly  convex  and smooth enough in the neighborhood of $t^*=0$.
 
 In the following section we consider the Regularized Newton's Method (RNM)(see \cite{10}), which eliminates the basic  drawback of the Classical Newton's Method. It  generates a  converging sequence  from any starting point $x_0 \in \Rn$ and retains   quadratic convergence rate in  the regularized Newton's area, which we will characterize  later.

 \section{Regularized Newton's Methods}

 Let $f\in C^2$ be a convex function in $\Rn$.
 We assume that the optimal set $X^*=\Argmin\{f(x):x\in \Rn\}$ is not empty and bounded.
 
 The corresponding regularized  at the point $x \in \Rn$ 
 function $F_x:\RntoR$ is defined by the following formula
 \begin{equation}\label{1.2}
 F_x(y) = f(y) + \frac{1}{2} \parallel \nabla f(x) \parallel
 \parallel y - x \parallel^2.
 \end{equation}
 
 For any $x \notin X^*$ we have $||\nabla f(x)|| > 0$, therefore for any convex function $f:\RntoR$ the regularized function $F_x$ is strongly convex in $y$ for any  $x\notin X^*$. If  $f$ is strongly convex at $x^*$, then  the regularized function $F_x$ is strongly convex in $\Rn$.
 The following properties  of  $F_x$ are direct
 consequences of the definition \eqref{1.2}.
 
 \noindent
 $1^\circ. \hspace{0.3cm} F_x(y)_{|y=x} =  f(x),$ \\
 $2^\circ. \hspace{0.3cm} \nabla_y F_x(y)_{|y=x} = \nabla f(x),$\\
 $3^\circ. \hspace{0.3cm} \nabla_{yy}^2 F_x(y)_{|y=x} = \nabla^2
 f(x) + ||\nabla f(x)|| I = H(x)$, 
 
 where $I$ is the identical  matrix
 in $\Rn$.
 
 For any $x \notin X^*$, the inverse $H^{-1}(x)$ exists for any convex $f\in C^2$. Therefore the
 regularized Newton's step 
 \begin{equation} \label{1.3}
 \hat{x} = x - (H(x))^{-1} \nabla f(x)
 \end{equation}
 can be performed for any convex $f\in C^2$ from any starting point $x \notin X^*$.
 
 We start by showing that the regularization (\ref{1.2}) improves the "quality" of the Newton's direction as well 
 the condition number of the Hessian $\nabla^2 f(x)$ at any $ x \in \Rn$ that $x \notin X^*$.
 
  We assume  at this point that for any
 given $x \in \Rn$ there exist $0 \leq  m(x) < M(x) < \infty$
 such that
 \begin{equation} \label{1.4}
 m(x) ||y||^2 \le (\nabla^2 f(x) y, y) \le M(x) ||y||^2
 \end{equation}
 holds  for any $ y \in \Rn$.
 
 The condition number of the Hessian $\nabla^2 f$ at $x\in\Rn$ is 
 $$\cond \nabla^2 f(x)=m(x)(M(x))^{-1}.$$
 Along with the regularized Newton's step (\ref{1.3}), we consider
 the classical Newton's step 
 \begin{equation} \label{1.5}
 \hat{x} = x - (\nabla^2 f(x))^{-1}\nabla f(x).
 \end{equation}
The regularized Newton's direction (RND) 
 $r(x)$ is defined by the   system
 \begin{equation}\label{1.6}
 H(x) r(x) = -\nabla f(x).
 \end{equation}
 The "quality" of any direction $d$ at $x\in\Rn$ is define by the following number
 $$0\leq q(d)=-\frac{(\nabla f(x),d)}{\|\nabla f(x)\|\cdot \|d\|}\leq 1.$$
 For the steepest descent direction $d(x) =
  -\nabla f(x) \parallel 
  \nabla f(x)\parallel^{-1} $ we have the best local descent direction and $q(d(x)) = 1$.
 The ``quality'' of the classical  Newton's direction is  defined by the
 following number
 \begin{eqnarray}
q(n(x)) = -\frac{(\nabla f(x), n(x))}{||\nabla f(x)||\cdot ||n(x)||} .
 \end{eqnarray}

 For the RND $r(x)$ we have
 \begin{eqnarray}\label{Eq21.6.16}
 q(r(x)) = -\frac{(\nabla f(x), r(x))}{||\nabla f(x)|| \cdot||r(x)||}.
 \end{eqnarray}

 The following theorem establishes the lower bounds for $q(r(x))$
 and $q(n(x))$. It shows that the regularization (\ref{1.2})
 improves the condition number of the Hessian $\nabla^2 f$ for
 all $x \in \Rn, x \notin X^*$ (see \cite{10}).
 
 \begin{theorem}
 Let $f :\RntoR$ be a twice continuous differentiable  convex function and    the bounds   (\ref{1.4}) hold,
 then:
 \begin{enumerate}
 \item[1.]
 \begin{eqnarray*}
 1 \ge q(r(x)) \ge (m(x) + ||\nabla f(x)||) (M(x) + ||\nabla
 f(x)||)^{-1}\\ = \cond\hspace{0.1cm} H(x) > 0 
 \hspace{1cm} \mbox{ for any } x \not\in X^*.
 \end{eqnarray*}
  \item[2.]
  \begin{eqnarray*}
 1 \ge q(n(x)) \ge m(x) (M(x))^{-1} = \cond \hspace{0.1cm}
 \nabla^2 f(x)\\
  \hspace{0.2cm} \mbox{ for any } x \in \mathbb{R}^n.
 \end{eqnarray*}
 \item[3.] $$\cond
  \hspace{0.1cm} H(x)-\cond \hspace{0.1cm} \nabla^2 f(x)  =$$
 \begin{eqnarray}\label{Eq8_18.9.15}
 \begin{split}
  ||\nabla f(x)|| (1-\cond \hspace{0.1cm}
 \nabla^2 f(x)) (M(x) + ||\nabla f(x)||)^{-1} > 0 
  \end{split}
 \end{eqnarray}
for any  $ x \not\in X^*$, $\cond \hspace{0.1cm} \nabla^2 f(x)
  < 1$.
 \end{enumerate}
 \end{theorem}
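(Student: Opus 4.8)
The three assertions all rest on one elementary estimate, which I would isolate first. The plan is to observe that both $n(x)$ and $r(x)$ have the common form $d=-B^{-1}\nabla f(x)$ for a symmetric positive definite matrix $B$ whose spectrum lies in an interval $[\mu,\nu]$ with $0<\mu<\nu$: for $n(x)$ one takes $B=\nabla^2 f(x)$, so $[\mu,\nu]=[m(x),M(x)]$ by \eqref{1.4}, and for $r(x)$ one takes $B=H(x)=\nabla^2 f(x)+\|\nabla f(x)\|I$, whose eigenvalues are those of $\nabla^2 f(x)$ shifted by $\|\nabla f(x)\|$ and hence lie in $[m(x)+\|\nabla f(x)\|,\,M(x)+\|\nabla f(x)\|]$. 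Writing $g=\nabla f(x)$ and using $d=-B^{-1}g$, the definition of quality gives
$$q(d)=-\frac{(g,-B^{-1}g)}{\|g\|\,\|B^{-1}g\|}=\frac{(g,B^{-1}g)}{\|g\|\,\|B^{-1}g\|}.$$

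For this common form I would prove $\mu/\nu\le q(d)\le 1$ by two sandwich inequalities. From $B\preceq\nu I$ one gets $B^{-1}\succeq\nu^{-1}I$, hence the numerator satisfies $(g,B^{-1}g)\ge\nu^{-1}\|g\|^2$; from $B\succeq\mu I$ one gets $\|B^{-1}\|\le\mu^{-1}$, hence $\|B^{-1}g\|\le\mu^{-1}\|g\|$. Dividing yields $q(d)\ge(\nu^{-1}\|g\|^2)/(\|g\|\cdot\mu^{-1}\|g\|)=\mu/\nu$, while the upper bound $q(d)\le 1$ is just the Cauchy--Schwarz inequality applied to $(g,B^{-1}g)$. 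Specializing $[\mu,\nu]=[m(x),M(x)]$ gives Assertion~2, and $[\mu,\nu]=[m(x)+\|g\|,M(x)+\|g\|]$ gives Assertion~1, with the two lower bounds being exactly $\cond\nabla^2 f(x)$ and $\cond H(x)$. Positivity of $\cond H(x)$ for $x\notin X^*$ is immediate, since $\|\nabla f(x)\|>0$ there forces $m(x)+\|\nabla f(x)\|>0$ even when $m(x)=0$.

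Assertion~3 is then pure algebra: putting $\cond H(x)=\frac{m(x)+\|g\|}{M(x)+\|g\|}$ and $\cond\nabla^2 f(x)=\frac{m(x)}{M(x)}$ over the common denominator $M(x)(M(x)+\|g\|)$ collapses the numerator to $(M(x)-m(x))\|g\|$, and factoring $M(x)-m(x)=M(x)(1-\cond\nabla^2 f(x))$ reproduces exactly the claimed $\|g\|(1-\cond\nabla^2 f(x))(M(x)+\|g\|)^{-1}$. This is strictly positive precisely when $\|g\|>0$ (that is, $x\notin X^*$) and $\cond\nabla^2 f(x)<1$ (that is, $M(x)>m(x)$).

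I do not expect a genuine obstacle; the entire content is the spectral sandwiching of the single ratio $q(-B^{-1}g)$. The only point requiring care is the well-definedness in Assertion~2 when $m(x)=0$, where $\nabla^2 f(x)$ may be singular and $n(x)$ need not exist, so the lower bound degenerates to the trivial $q\ge 0$. This degeneracy is exactly what the regularization removes, because $H(x)$ stays uniformly positive definite off $X^*$; emphasizing that contrast, quantified by $\cond H(x)-\cond\nabla^2 f(x)>0$ in Assertion~3, is really the purpose of the theorem rather than a difficulty in its proof.
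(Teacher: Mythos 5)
Your proof is correct and takes essentially the same route as the paper's: both reduce each quality bound to the two-sided spectral estimate coming from (\ref{1.4}) (shifted by $\|\nabla f(x)\|$ for $H(x)$, via property $3^\circ$), cap $q$ by Cauchy--Schwarz, and finish Assertion~3 with the identical common-denominator algebra. The only difference is organizational: you unify Assertions~1 and~2 into one lemma stated for $B^{-1}$ (numerator $(g,B^{-1}g)\ge \nu^{-1}\|g\|^2$ and $\|B^{-1}g\|\le\mu^{-1}\|g\|$), while the paper runs the dual form of the same estimates separately for each direction, namely $(H(x)r(x),r(x))\ge (m(x)+\|\nabla f(x)\|)\|r(x)\|^2$ together with $\|\nabla f(x)\|\le (M(x)+\|\nabla f(x)\|)\|r(x)\|$, and likewise for $n(x)$ --- so the content is the same.
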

 
 \proof
 
 \begin{enumerate}
 \item[1.] From (\ref{1.6}), we obtain
 \begin{eqnarray} \label{1.9}
 ||\nabla f(x)|| \le ||H(x)||\cdot ||r(x)||.
 \end{eqnarray}
 Using the right inequality (\ref{1.4}) and $3^\circ$, we have
 \begin{eqnarray} \label{1.10}
 ||H(x)|| \le M(x) + ||\nabla f(x)||,
 \end{eqnarray}
 From (\ref{1.9}) and (\ref{1.10}) we obtain
 \begin{eqnarray*}
 ||\nabla f(x)|| \le (M(x) + ||\nabla f(x)||) ||r(x)||.
 \end{eqnarray*}
 
From \eqref{1.6} the left inequality (\ref{1.4}) and $3^{\circ}$ follows 
$$-(\nabla f(x), r(x))=(H(x)r(x),r(x))\geq (m(x)+\|\nabla f(x)\|)\|r(x)\|^2.$$
Therefore from \eqref{Eq21.6.16} follows
 \begin{eqnarray*}
 q(r(x)) \ge (m(x) + ||\nabla f(x) ||)(M(x) + ||\nabla f(x)||)^{-1}
 = \mbox{cond} \hspace{0.1cm} H(x).
 \end{eqnarray*}
 \item[2.] Now let us  consider the Newton's direction $n(x)$. From
 (\ref{1.5}), we have
 \begin{eqnarray} \label{1.11}
 \nabla f(x) = - \nabla^2 f(x) n(x),
 \end{eqnarray}
 therefore,
 \begin{eqnarray*}
 -(\nabla f(x), n(x)) = (\nabla^2 f(x) n(x), n(x)).
 \end{eqnarray*}
From \eqref{1.11} left inequality of (\ref{1.4}), we obtain 
 \begin{equation}\label{1.12}
 q(n(x)) = -\frac{(\nabla f(x), n(x))}{||\nabla f(x)|| \cdot||n(x)||}
 \ge m(x) ||n(x)||\cdot ||\nabla f(x)||^{-1}.
 \end{equation}
 From (\ref{1.11}) and the right  inequality in (\ref{1.4}) follows
 \begin{equation}\label{1.13}
 || \nabla f(x)|| \le ||\nabla^2 f(x)|| \cdot||n(x)|| \le M(x) ||n(x)||.
 \end{equation}
 Combining (\ref{1.12}) and (\ref{1.13}) we have
 \begin{eqnarray*}
 q(n(x)) \ge \frac{m(x)}{M(x)} = \mbox{cond} \hspace{0.1cm}
 \nabla^2 f(x).
 \end{eqnarray*}
 \item[3.] Using the formulas for the condition numbers of $\nabla^2
 f(x)$ and $H(x)$ we obtain (\ref{1.4})
 \qed
 \end{enumerate}
 
  \begin{corollary}
  
 The regularized Newton's direction $r(x)$ is a
  decent direction for any convex $f: \Rn\to\RR$, whereas the  classical Newton's direction $n(x)$ exists and it is a
  decent direction only if $f$ is a strongly convex at $x\in \Rn$. 
  \end{corollary}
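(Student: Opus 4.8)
The plan is to read off both assertions directly from the preceding Theorem, together with the elementary characterization of a descent direction: $d$ is a descent direction for $f$ at $x$ precisely when $(\nabla f(x),d)<0$, equivalently when the quality number $q(d)>0$.

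For the first assertion I would fix any $x\notin X^*$, so that $\|\nabla f(x)\|>0$. Since $f$ is convex, $\nabla^2 f(x)\succeq 0$, whence by property $3^{\circ}$ the regularized Hessian satisfies $H(x)=\nabla^2 f(x)+\|\nabla f(x)\|I\succeq\|\nabla f(x)\|I\succ 0$. Thus $H(x)$ is positive definite, its inverse exists, and $r(x)=-H(x)^{-1}\nabla f(x)$ is well defined for \emph{every} convex $f\in C^2$. Then $(\nabla f(x),r(x))=-(H(x)^{-1}\nabla f(x),\nabla f(x))<0$, so $r(x)$ is a descent direction. Equivalently, this is exactly part~1 of the Theorem, which yields $q(r(x))\ge\cond H(x)>0$ for every $x\notin X^*$; positivity of $q(r(x))$ is the descent property.

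For the second assertion the key point is that $n(x)=-(\nabla^2 f(x))^{-1}\nabla f(x)$ is defined only when $\nabla^2 f(x)$ is invertible. For a convex $f$ the Hessian is positive semidefinite, and a positive semidefinite matrix is invertible if and only if it is positive definite, i.e. if and only if $m(x)>0$ in (\ref{1.4}); this is precisely the requirement that $f$ be strongly convex at $x$. When $m(x)>0$ the inverse $(\nabla^2 f(x))^{-1}$ is itself positive definite, so $(\nabla f(x),n(x))=-((\nabla^2 f(x))^{-1}\nabla f(x),\nabla f(x))<0$ and $n(x)$ is a descent direction; this is the content of part~2 of the Theorem, $q(n(x))\ge m(x)(M(x))^{-1}>0$. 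Conversely, if $m(x)=0$ the Hessian may be singular and $n(x)$ need not exist at all, which establishes the ``only if''.

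There is essentially no obstacle here: the corollary is a direct reading of the two quality bounds proved in the Theorem. The only point that needs a little care is making the ``only if'' precise, namely that for a convex function the existence of the classical Newton direction is equivalent to invertibility of a positive semidefinite Hessian, and hence to strong convexity ($m(x)>0$) at the point in question, whereas the regularization $\|\nabla f(x)\|I$ guarantees positive definiteness of $H(x)$ unconditionally away from $X^*$.
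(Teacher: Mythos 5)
Your proposal is correct and matches the paper's intent exactly: the paper states this corollary without a separate proof, as an immediate reading of parts 1 and 2 of the preceding theorem (positivity of $q(r(x))\ge \operatorname{cond} H(x)$ for any $x\notin X^*$, versus $q(n(x))\ge m(x)(M(x))^{-1}$ which is positive only when $m(x)>0$), together with the earlier observation that $H^{-1}(x)$ exists for any convex $f\in C^2$ at $x\notin X^*$. Your added details --- $H(x)\succeq \|\nabla f(x)\|I\succ 0$ away from $X^*$, and the equivalence for positive semidefinite matrices between invertibility and positive definiteness to make the ``only if'' precise --- are exactly the right elaborations and involve no gap.
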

  
 Under condition \eqref{Eq_4.2.16}  and \eqref{Eq2_4.2.16}  the RNM retains the local quadratic  convergence rate, which is
  typical for the Classical Newton's method.
 
On the other hand, the regularization (\ref{1.2})
 allows to establish global convergence  and estimate complexity of the 
  RNM, when the original function is only strongly convex at $x^*$.

 \section{Local Quadratic Convergence Rate of the RNM}
  In this section we consider the RNM  and determine the   neighborhood of the minimizer, where the RNM converges with  quadratic rate.

 Along with  assumptions (\ref{Eq_4.2.16}) and   (\ref{Eq2_4.2.16}) for the Hessian $\nabla^2f$  we will use the Lipschitz condition  for the gradient $\nabla f$ 
 \begin{equation}
 \label{Eq4_5.2.16}
 \|\nabla f(x)-\nabla f(y)\|\leq L\|x-y\|,
 \end{equation}
 which is equivalent to  (\ref{Eq3_5.2.16}).
 
 The RNM  generates a sequence $\{x_s\}_{s=0}^{\infty}$: 
 \begin{equation}
 x_{s+1}=x_s-\left[\nabla^2 f(x_s)+\|\nabla f(x_s)\| I\right] ^{-1} \nabla f(x_s).\label{1.14}
 \end{equation}
 
 The following Theorem  characterizes the regularized Newton's area.
 \begin{theorem}\label{NTheorem}
 If  (\ref{Eq_4.2.16}),  (\ref{Eq2_4.2.16}) and \eqref{Eq4_5.2.16}  hold, then for $ \delta=\frac{2}{3} \frac{m}{M+2L}$ and any $x_0 \in B(x^*, \delta)$ as a starting point, the sequence $\{ x_s\}_{s=0}^{\infty} $ generated by RNM  (\ref{1.14}) belongs to $B(x^*, \delta)$ and the following bound holds:
 \begin{equation}
 \Delta_{s+1}=\|x_{s+1}-x^*\| \leq \frac{M+2L}{2}\cdot \frac{1}{m-(M+2L)\Delta_s}\Delta_s^2,\, s\geq 1.\label{1.14a}
 \end{equation}
 \end{theorem}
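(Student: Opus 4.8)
The plan is to run the argument of Theorem~\ref{TNewt1} almost verbatim, while keeping track of the effect of the extra regularization term $\|\nabla f(x_s)\|I$ that distinguishes $H(x_s)=\nabla^2 f(x_s)+\|\nabla f(x_s)\|I$ from the bare Hessian. First I would use $\nabla f(x^*)=0$ together with the RNM recursion \eqref{1.14} to write
$$x_{s+1}-x^*=H(x_s)^{-1}\big[H(x_s)(x_s-x^*)-(\nabla f(x_s)-\nabla f(x^*))\big].$$
Substituting $H(x_s)=\nabla^2 f(x_s)+\|\nabla f(x_s)\|I$ and the integral representation $\nabla f(x_s)-\nabla f(x^*)=\int_0^1\nabla^2 f(x^*+\tau(x_s-x^*))(x_s-x^*)\,d\tau$, the two Hessian contributions collapse into the same matrix
$$H_s=\int_0^1[\nabla^2 f(x_s)-\nabla^2 f(x^*+\tau(x_s-x^*))]\,d\tau$$
that appears in \eqref{Newt17}, while the regularization survives as a separate summand, so that
$$x_{s+1}-x^*=H(x_s)^{-1}\big[H_s+\|\nabla f(x_s)\|I\big](x_s-x^*).$$

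Next I would estimate the three factors. The bound $\|H_s\|\le\tfrac{M}{2}\Delta_s$ is exactly the estimate already produced in Theorem~\ref{TNewt1} from the Hessian Lipschitz condition \eqref{Eq2_4.2.16}, so it may be quoted. The genuinely new ingredient is the regularization term: since $\nabla f(x^*)=0$, the gradient Lipschitz condition \eqref{Eq4_5.2.16} yields $\|\nabla f(x_s)\|=\|\nabla f(x_s)-\nabla f(x^*)\|\le L\Delta_s$, whence the triangle inequality gives
$$\|H_s+\|\nabla f(x_s)\|I\|\le\|H_s\|+\|\nabla f(x_s)\|\le\tfrac{M}{2}\Delta_s+L\Delta_s=\tfrac{M+2L}{2}\Delta_s.$$
For the inverse I would observe that $\|\nabla f(x_s)\|I\succeq 0$ forces $H(x_s)\succeq\nabla^2 f(x_s)$, and that the estimate $\nabla^2 f(x_s)\succeq(m-M\Delta_s)I$ obtained from \eqref{Eq_4.2.16} and \eqref{Eq2_4.2.16} as in Theorem~\ref{TNewt1} then gives $H(x_s)\succeq(m-M\Delta_s)I$; hence $\|H(x_s)^{-1}\|\le(m-M\Delta_s)^{-1}\le(m-(M+2L)\Delta_s)^{-1}$ whenever $\Delta_s<m(M+2L)^{-1}$. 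Multiplying the three bounds produces \eqref{1.14a}.

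Finally I would close with the invariance/contraction step of Theorem~\ref{TNewt1}: from \eqref{1.14a} one has $\Delta_{s+1}<\Delta_s$ as soon as $(M+2L)\Delta_s<2\big(m-(M+2L)\Delta_s\big)$, i.e. $3(M+2L)\Delta_s<2m$, i.e. $\Delta_s<\delta=\frac{2}{3}\frac{m}{M+2L}$. Consequently any $x_0\in B(x^*,\delta)$ generates a sequence that never leaves $B(x^*,\delta)$ and converges to $x^*$ at the quadratic rate \eqref{1.14a}.

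I expect the only real point of care, as opposed to routine transcription from Theorem~\ref{TNewt1}, to be the dual role of the regularization term: the same summand $\|\nabla f(x_s)\|I$ must be carried along inside the numerator factor $H_s+\|\nabla f(x_s)\|I$, where, via \eqref{Eq4_5.2.16}, it is responsible for upgrading $M$ to $M+2L$, while being simultaneously discarded as a positive-semidefinite term in the denominator to certify the invertibility and the norm bound for $H(x_s)^{-1}$. A minor bookkeeping point is that the clean denominator $m-(M+2L)\Delta_s$ displayed in \eqref{1.14a} is a deliberate weakening of the sharper $m-M\Delta_s$, chosen so that numerator and denominator share the constant $M+2L$ and the contraction threshold simplifies to $\delta=\frac{2}{3}\frac{m}{M+2L}$.
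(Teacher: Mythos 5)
Your proposal is correct and follows essentially the same route as the paper's proof: the paper merely absorbs the regularization term $\|\nabla f(x_s)\|I$ into the integrand defining $H_s$ and bounds the combined integral by $\frac{M+2L}{2}\Delta_s$ in one step, whereas you keep it as a separate summand and apply the triangle inequality, which yields the identical estimate. Your treatment of the inverse (discarding $\|\nabla f(x_s)\|I$ as positive semidefinite, then weakening $m-M\Delta_s$ to $m-(M+2L)\Delta_s$ to match the numerator's constant) and your contraction threshold $\Delta_s<\frac{2}{3}\frac{m}{M+2L}$ reproduce exactly the paper's bounds \eqref{1.18} and \eqref{Eq_31.5.16}.
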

 \proof From (\ref{1.14}) follows
 
 \begin{eqnarray*}
 x_{s+1}-x^* =x_s-x^*-\left[ \nabla ^2 f(x_s)+\|\nabla f(x_s)\|I\right] ^{-1}(\nabla f(x_s)- \nabla f(x^*)).
 \end{eqnarray*}
 Using 
 $$\nabla f(x_s)- \nabla f(x^*) = \int_{0}^1 \nabla ^2 f(x^*+\tau(x_s-x^*))(x_s-x^*)d\tau, $$
 we obtain 
 \begin{equation}
 x_{s+1}-x^*= \left[ \nabla ^2 f(x_s) +\|\nabla f(x_s)\|I\right] ^{-1} H_s (x_s-x^*), \label{1.15}
 \end{equation}
 where
 $$H_s =\int_0^1 (\nabla^2 f(x_s) +\|\nabla f(x_s) \| I -\nabla ^2 f(x^* +\tau (x_s -x^*)))d\tau.$$
 From  (\ref{Eq2_4.2.16}) and (\ref{Eq4_5.2.16}) follows  
 \begin{eqnarray*}
 \|H_s\|=\|\int_0^1 \left( \nabla ^2 f(x_s) +\|\nabla f(x_s) \|I- \nabla ^2 f(x^*+\tau(x_s-x^*))\right)d\tau\|\\
 \leq \|\int_0^1 (\nabla ^2f(x_s)-\nabla^2 f(x^*+ \tau (x_s-x^*)))d\tau \|+ \int_0^1 \|\nabla f(x_s)\| d\tau\\
 \leq \int_0^1 \| \nabla^2 f(x_s)-\nabla ^2 f(x^*+\tau(x_s-x^*))\|d\tau +\int_0 ^1 \|\nabla f(x_s)-\nabla f(x^*) \| d\tau\\
 \leq \int_0^1 M \|x_s-x^* -\tau(x_s-x^*)\|d\tau+\int_0^1 L\|x_s-x^*\|d\tau
 \end{eqnarray*}
 \begin{equation}\label{Eq5_5.2.16}
 =\int_0^1 (M(1-\tau)+L) \|x_s-x^* \|d\tau=\frac{M+2L}{2}\|x_s-x^*\|.
  \end{equation}
  
 From (\ref{1.15}) and \eqref{Eq5_5.2.16} we have
 \begin{eqnarray}
 \Delta_{s+1}=\|x_{s+1}-x^*\| \leq \|\left( \nabla ^2 f(x_s)+\|\nabla f(x_s)\|I\right) ^{-1} \|\cdot \|H_s\|\cdot \|x_s-x^*\| \nonumber\\
 \leq\frac{M+2L}{2}\|(\nabla ^2 f(x_s)+\|\nabla f(x_s)\| I)^{-1}\|\Delta_s^2 .\label{1.16}
 \end{eqnarray}
 From (\ref{Eq2_4.2.16}) follows 
 \begin{equation}
 \|\nabla^2 f(x_s)-\nabla^2 f(x^*) \| \leq M\| x_s-x^* \|=M\Delta_s,\label{1.17}
 \end{equation}
 therefore we have  
 \begin{equation}\label{Eq_29.8.15}
 \nabla ^2 f(x^*) +M\Delta_s I\succeq \nabla ^2 f(x_s) \succeq \nabla^2 f(x^*)-M\Delta_sI.
 \end{equation}
 From  (\ref{Eq_4.2.16}) and \eqref{Eq_29.8.15} we obtain
 $$\nabla ^2 f(x_s) +\|\nabla f(x_s) \| I \succeq (m+\|\nabla f(x_s)\| -M \Delta _s)I.$$
 
 Therefore for  $\Delta _s < \frac{m+\|\Delta f(x_s)\|}{M} $ the matrix $\nabla ^2 f(x_s) + \|\nabla f(x_s)\| I$ is positive definite, therefore its inverse exists and we have 
 \begin{eqnarray}
 \| (\nabla^2 f(x_s) +\|\nabla f(x_s) \|I ) ^{-1} \| \leq \frac{1}{m+\|\nabla f(x_s)\|-M\Delta _s}\leq \nonumber\\
 \frac{1}{m-M\Delta _s}.\label {1.18}
 \end{eqnarray} 
 For $\Delta_s\leq\frac{2}{3}\frac{m}{M+2L}$ from (\ref{1.16}) and (\ref{1.18}) follows 
 \begin{equation}\label{Eq_31.5.16}\Delta_{s+1} \leq \frac{M+2L}{2}\frac{1}{m-(M+2L)\Delta_s}\Delta_s^2.
 \end{equation}
 Therefore from \eqref{Eq_31.5.16} for $0< \Delta_s \leq \frac{2}{3}\frac{m}{M+2L}<\frac{m+\|\nabla f(x_s)\|}{M} $ we obtain 
 $$\Delta_{s+1} \le \frac{3(M+2L)}{2}\frac{1}{m} \Delta_s^2 \leq \Delta_s.$$
 Hence, for   $\delta = \frac{2}{3} \frac{m}{M+2L}$ and  any $x_0 \in B(x^*, \delta)$ as a starting point the sequence $\{x_s\}^{\infty}_{s=0}$ generated by \eqref{1.14} belongs to $B(x^*,\delta)$ and the bound \eqref{1.14a} holds.
 
  The proof of Theorem \ref{NTheorem} is completed. \qed 
  \begin{corollary}\label{C_31.5.15}
  Under conditions of Theorem \ref{NTheorem} for  $\delta=\frac{2}{3}\frac{m}{M+2L}$ and any $x\in B(x^*,\delta) $   the Hessian $\nabla^2 f(x)$ is positive definite and 
  \begin{equation}\label{Eq5_12.3.16}
  \nabla^2 f(x)\succeq m_0I,
  \end{equation}
  where $m_0=m(\frac{1}{3}M+2L)(M+2L)^{-1}$
  \end{corollary}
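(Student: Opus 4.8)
The plan is to read off the required lower bound directly from the proof of Theorem \ref{NTheorem}, since all the analytic work has already been carried out there; the corollary is essentially a matter of recording a byproduct of that proof. First I would recall the semidefinite estimate for the \emph{unregularized} Hessian that appears inside the theorem: combining \eqref{Eq_4.2.16} with \eqref{Eq_29.8.15} gives, for any $x\in B(x^*,\delta)$ with $\Delta=\|x-x^*\|$, the chain $\nabla^2 f(x)\succeq \nabla^2 f(x^*)-M\Delta I\succeq (m-M\Delta)I$. This is precisely the inequality that was used (after adding the regularization term $\|\nabla f(x)\|I$) to invert the matrix in \eqref{1.18}; here I simply keep the unregularized version, since dropping the extra positive term $\|\nabla f(x)\|I$ only weakens the bound and yields a constant independent of $x$.

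Next I would substitute the worst case over the ball. Because $t\mapsto m-Mt$ is decreasing, the minimum of $m-M\Delta$ over $B(x^*,\delta)$ is attained at $\Delta=\delta=\tfrac{2}{3}\tfrac{m}{M+2L}$, so $\nabla^2 f(x)\succeq (m-M\delta)I$ for every $x$ in the regularized Newton's area. The only remaining task is the routine simplification of $m-M\delta$: writing $M\delta=\tfrac{2mM}{3(M+2L)}$ gives $m-M\delta=m\tfrac{3(M+2L)-2M}{3(M+2L)}=m\tfrac{M+6L}{3(M+2L)}=m(\tfrac{1}{3}M+2L)(M+2L)^{-1}=m_0$, which is strictly positive since $m,M,L>0$. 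Hence $\nabla^2 f(x)\succeq m_0 I\succ 0$ throughout $B(x^*,\delta)$, establishing both positive definiteness and the bound \eqref{Eq5_12.3.16}.

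I do not expect any genuine obstacle: the corollary follows from an inequality already derived inside Theorem \ref{NTheorem}, and the rest is arithmetic. The only point requiring a moment's care is conceptual rather than technical, namely to base the estimate on the unregularized lower bound $(m-M\Delta)I$ and to evaluate it at the extreme radius $\Delta=\delta$; this makes $m_0$ a uniform constant over the whole area, which is exactly what later arguments about the regularized Newton's method will need.
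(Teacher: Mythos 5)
Your proposal is correct and follows essentially the same route as the paper: the paper's own argument likewise combines the Lipschitz sandwich $\nabla^2 f(x^*)+M\Delta I\succeq \nabla^2 f(x)\succeq \nabla^2 f(x^*)-M\Delta I$ with $\nabla^2 f(x^*)\succeq mI$, substitutes the worst case $\Delta=\delta=\frac{2}{3}\frac{m}{M+2L}$, and performs the identical simplification $m-M\delta=m(\frac{1}{3}M+2L)(M+2L)^{-1}=m_0$. Your arithmetic and the observation that dropping the regularization term $\|\nabla f(x)\|I$ only weakens the bound are both sound.
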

  In fact, from \eqref{Eq2_5.2.16} follows 
  $$\nabla ^2 f(x^*)+M\Delta xI\succeq \nabla ^2 f(x)\succeq \nabla ^2 f(x^*)-M\Delta xI,$$ so for any $x\in B(x^*,\delta)$ we have 
  $$\nabla ^2 f(x)\succeq \left(m-\frac{2}{3}\frac{Mm}{M+2L}\right)I=m(\frac{1}{3}M+2L)(M+2L)^{-1}I=m_0I.$$
  From the letter inequality follows 
  $$\|\nabla f(x)\|\|x-x^*\|\geq(\nabla f(x)-\nabla f(x^*),x-x^*)\geq m_0\|x-x^*\|^2,$$
 that is for any $x\in B(x^*,\delta)$ we have 
  \begin{equation}\label{Eq3_12.3.16}
  \|\nabla f(x)\|\geq m_0\|x-x^*\|.
  \end{equation}
 
 It follows from Theorem \ref{NTheorem} that $B(x^*, \delta)$ with $\delta=\frac{2}{3} \frac{m}{M+2L}$ is the Newton's area for the  RNM.
 
 So it takes $O(\ln \ln \varepsilon ^{-1} )$ regularized Newton's  steps to find an $\varepsilon$-approximation for $x^*$ from any  $x \in B(x^*, \delta)$ as a starting point.

 To make  the RNM globally convergent we have to replace the RNM by DRNM and  adjust the step length. It can be done by backtracking line search, using Armijo condition (\ref{Newt37}) with Newton's direction $n(x)$ replaced by regularized Newton's direction $r(x)$. In the following section we introduce another version of the DRNM and   estimate the number of RNM steps required for finding $x\in \mathbb{B}(x^*,\delta)$  from any  given starting point $x_0 \in \Rn$. 

  \section{Damped Regularized Newton's Method}
 Let us  consider the regularized Newton's decrement 
\begin{equation}
\label{Eq2_12.3.16}
\lambda_r(x)=(H^{-1}(x)\nabla f(x), \nabla f(x))^{\frac{1}{2}} =[-(\nabla f(x), r(x))]^{\frac{1}{2}}.
\end{equation}
 
 We  assume that $\varepsilon>0$ is small enough, in particular, 
 \begin{equation}\label{Eq4_12.3.16}
 0<\varepsilon^{0.5}<  m_0(L+\|\nabla f(x)\|)^{-0.5},
 \end{equation}
 for $\forall x\in \mathcal{L}_0.$
 
 From \eqref{Eq3_5.2.16} follows
 \begin{equation}\label{Eq_12.3.16}
 (\nabla ^2 f(x)+\|\nabla f(x)\|I)\preceq (L+\|\nabla f(x)\|)I.
 \end{equation}
 On the other hand, for any $x\in B(x^*,\delta)$ from the Corollary \ref{C_31.5.15} we have 
 $$\nabla ^2 f(x)+\|\nabla f(x)\|I\succeq (m_0+\|\nabla f(x)\|)I.$$
 Therefore the inverse $(\nabla ^2 f(x)+\|\nabla f(x)\|I)^{-1}$ exists and from \eqref{Eq_12.3.16} we obtain 
 $$H^{-1}(x)=(\nabla ^2 f(x)+\|\nabla f(x)\|I)^{-1}\succeq(L+\|\nabla f(x)\|)^{-1}I.$$
 Therefore  from \eqref{Eq2_12.3.16}  for any $x\in B(x^*,\delta)$ we have
 $$\lambda_{(r)}(x)=(H^{-1}(x)\nabla f(x),\nabla f(x))^{0.5}\geq(L+\|\nabla f(x)\|)^{-0.5}\|\nabla f(x)\|,$$
 which together with \eqref{Eq3_12.3.16}  leads to 
 $$\l_{(r)}(x)\geq m_0(L+\|\nabla f(x)\|)^{-0.5}\|x-x^*\|.$$
 Then from $\l_{(r)}(x)\leq \varepsilon^{1.5}$ and \eqref{Eq4_12.3.16} follows 
 $$m_0(L+\|\nabla f(x)\|)^{-0.5}\varepsilon\geq\varepsilon^{1.5}\geq \l_{(r)}(x)\geq m_0(L+\|\nabla f(x)\|)^{-0.5}\|x-x^*\|$$
 or
 $$\|x-x^*\|\leq\varepsilon, \forall x\in B(x^*,\delta).$$
 Therefore $\l_{(r)}(x)\leq \varepsilon^{1.5}$ can be used as a stopping criteria.
 
 We are ready to  describe the DRNM.
 
 Let $x_0 \in \Rn$ be a starting point and   $0<\varepsilon< \delta$ be the required accuracy, set $x:=x_0$.
 \begin{enumerate}
 \item[1.] Compute the regularized Newton's direction $r(x)$ by solving the system (\ref{1.6});
 
 \item[2.] if the following inequality 
\begin{equation}
f(x+tr(x))\leq f(x) +0.5 (\nabla f(x), r(x))
\end{equation}
holds, then  set $t(x):=1$, otherwise set $t(x):=(2L)^{-1}\|\nabla f(x)\|$; 
 \item[3.] $x:=x+t(x)r(x)$;
 \item[4.]   if $\lambda_r(x)\leq \varepsilon^{1.5}$, then $x^*:=x$,
  otherwise  go to 1.
 \end{enumerate}
 The global convergence and the complexity of the DRNM we consider in the following section.


 \section{Complexity of the DRNM}
 We assume  that conditions  (\ref{Eq_4.2.16}) and  (\ref{Eq2_4.2.16}) are satisfied.
 Due to (\ref{Eq_4.2.16}) the solution $x^*$ is unique. Hence, from  convexity $f$  follows  that for any given starting point $x_0\in\Rn$  the  sublevel set $\mathcal{L}_0$ is bounded, therefore   there is $L>0$ such that (\ref{Eq3_5.2.16})  holds on $\mathcal{L}_0$.
 
 Let $\mathbb{B}(x^*,r)=\{x\in\Rn:\|x-x^*\|\leq r\}$ be the ball with center $x^*$ and radius $r>0$ and $r_0=\min\{r: \mathcal{L}_0\subset \mathbb{B}(x^*,r)\}$.
 \begin{theorem}\label{TNewt5}
 If (\ref{Eq_4.2.16})  and  (\ref{Eq2_4.2.16}) are  satisfied and $\delta=\frac{2}{3}\frac{m}{M+2L}$, then from any given starting point $x_0\in\mathcal{L}_0$ it takes 
 \begin{equation}\label{Eq2_19.2.16}
 N_0=13.5\left( \frac{L^2(M+2L)^3}{(m_0m)^3}(1+ r_0) (f(x_0)-f(x^*))\right)
 \end{equation}
 
DRN steps to get $x\in\mathbb{B}(x^*,\delta)$.
 \end{theorem}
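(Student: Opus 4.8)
The plan is to mirror the complexity proof of the DNM (Theorem~\ref{TNewt2}): I will show that, as long as the current iterate $x$ lies outside $\mathbb{B}(x^*,\delta)$, a single DRNM step decreases $f$ by at least a fixed positive amount, and then divide the total available decrease $f(x_0)-f(x^*)$ by that amount. Every step strictly decreases $f$, so the whole sequence stays in $\mathcal{L}_0$; consequently the global bounds $\nabla^2 f\preceq LI$ from \eqref{Eq_2.3.16} (equivalently \eqref{Eq3_5.2.16}) and $\|\nabla f(x)\|\le L\|x-x^*\|\le Lr_0$ hold along the iterations, and these are the only global facts I will need.

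First I would reproduce, with $n(x)$ replaced by $r(x)$, the reduction estimate behind \eqref{Newt31}: convexity of $\varphi(t)=f(x+tr(x))$ together with the existence of $t(x)>0$ satisfying $(\nabla f(x+t(x)r(x)),r(x))\ge\half(\nabla f(x),r(x))$ gives
\begin{equation}
f(x)-f(x+t(x)r(x))\ge\half\,t(x)\lambda_r^2(x).\label{eq:pr1}
\end{equation}
To lower-bound $t(x)$, the same combination rewritten as $(\nabla f(x+t(x)r(x))-\nabla f(x),r(x))\ge\half\lambda_r^2(x)$ becomes, by the mean-value theorem, $t(x)(\nabla^2 f(\cdot)r(x),r(x))\ge\half\lambda_r^2(x)$; using $(\nabla^2 f(\cdot)r(x),r(x))\le L\|r(x)\|^2$ on the left and $\lambda_r^2(x)=(H(x)r(x),r(x))\ge\|\nabla f(x)\|\,\|r(x)\|^2$ (from $\nabla^2 f\succeq0$) on the right yields $t(x)\ge\|\nabla f(x)\|(2L)^{-1}$, which justifies the step-length choice in step~2 of the DRNM. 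Finally $H(x)\preceq(L+\|\nabla f(x)\|)I$ gives $\lambda_r^2(x)\ge\|\nabla f(x)\|^2(L+\|\nabla f(x)\|)^{-1}$. Chaining these three inequalities produces the per-step bound
\begin{equation}
\Delta f(x)\ge\frac{\|\nabla f(x)\|^3}{4L\,(L+\|\nabla f(x)\|)}.\label{eq:pr2}
\end{equation}

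The crux is to lower-bound $\|\nabla f(x)\|$ for $x\notin\mathbb{B}(x^*,\delta)$; here, unlike in Theorem~\ref{TNewt2}, I cannot use global strong convexity, only $\nabla^2 f\succeq m_0I$ on $B(x^*,\delta)$ from Corollary~\ref{C_31.5.15}. I would integrate $\psi(\tau)=(\nabla f(x^*+\tau(x-x^*)),x-x^*)$, which is nondecreasing by convexity and satisfies $\psi(0)=0$. On $[0,\delta\|x-x^*\|^{-1}]$ the argument stays in $B(x^*,\delta)$, so $\psi'(\tau)\ge m_0\|x-x^*\|^2$ there, whence $\psi(1)\ge\psi(\delta\|x-x^*\|^{-1})\ge m_0\delta\|x-x^*\|$; comparing with $\psi(1)=(\nabla f(x),x-x^*)\le\|\nabla f(x)\|\,\|x-x^*\|$ gives
\begin{equation}
\|\nabla f(x)\|\ge m_0\delta,\qquad x\notin\mathbb{B}(x^*,\delta).\label{eq:pr3}
\end{equation}
Bounding the numerator of \eqref{eq:pr2} below by \eqref{eq:pr3} and its denominator above by $4L(L+\|\nabla f(x)\|)\le4L^2(1+r_0)$, which uses $\|\nabla f(x)\|\le Lr_0$ on $\mathcal{L}_0$, I obtain the uniform per-step decrease
\begin{equation}
\Delta f(x)\ge\frac{(m_0\delta)^3}{4L^2(1+r_0)},\qquad x\in\mathcal{L}_0\setminus\mathbb{B}(x^*,\delta).\label{eq:pr4}
\end{equation}

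To finish, I substitute $\delta=\frac23\frac{m}{M+2L}$, so that $(m_0\delta)^3=\frac{8}{27}(m_0m)^3(M+2L)^{-3}$, and divide the total decrease $f(x_0)-f(x^*)$ by the right-hand side of \eqref{eq:pr4}; the resulting constant $\frac{4\cdot27}{8}=13.5$ reproduces exactly \eqref{Eq2_19.2.16}. The reduction and step-length estimates of the second paragraph are essentially the regularized copies of the DNM computation, so I expect the genuine obstacle to be the third paragraph: deriving $\|\nabla f(x)\|\ge m_0\delta$ away from the minimizer from local strong convexity alone, and correctly threading the upper bound $\|\nabla f(x)\|\le Lr_0$ through the denominator of \eqref{eq:pr2} to produce the factor $(1+r_0)$.
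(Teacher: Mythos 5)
Your proposal is correct and takes essentially the same route as the paper's proof: the chain $\Delta f(x)\ge\frac{1}{2}t(x)\lambda_r^2(x)$ with $t(x)\ge\|\nabla f(x)\|(2L)^{-1}$ and $\lambda_r^2(x)\ge\|\nabla f(x)\|^2(L+\|\nabla f(x)\|)^{-1}$, the lower bound $\|\nabla f(x)\|\ge m_0\delta$ outside $\mathbb{B}(x^*,\delta)$ from local strong convexity on the ball, and the factor $(1+r_0)$ obtained from $\|\nabla f(x)\|\le Lr_0$ on $\mathcal{L}_0$ are precisely the paper's steps, yielding the same constant $13.5$. Your integration of $\psi(\tau)=(\nabla f(x^*+\tau(x-x^*)),x-x^*)$ over the portion of the segment inside the ball is only a cosmetic variant of the paper's argument via the boundary point $\widetilde{x}\in\partial B(x^*,\delta)$ and monotonicity of the directional derivative.
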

 
 \begin{proof}
  For the regularized Newton's directional derivative  we have 
 $$ \frac{df(x+tr(x))}{dt}|_{t=0}=(\nabla f(x), r(x))=$$
$$ -\left((\nabla ^2 f(x)+\| \nabla f(x) \|I) r(x), r(x)\right) \leq $$
 \begin{equation}
  -(m(x)+\|\nabla f(x)\|) \|r(x) \|^2,\label{1.21}
 \end{equation}
 where $m(x) \geq 0$ and $\|\nabla f(x)\|>0$ for any $x\neq x^*$. It means that RND is a decent direction at any $x \in \mathcal{L}_0$ and $x\neq x^*$.

 It follows from (\ref{1.21}) that $\varphi (t) = f(x+tr(x))$ is monotone decreasing for small $t>0$.\\
  
 From the convexity of $f$ follows that  $\varphi '(t) = (\nabla f(x+tr(x)), r(x)) $ is not decreasing in $t>0$, hence at some  $t=t(x)$ we have
 \begin{equation}
 (\nabla f(x+t(x)r(x)), r(x))\geq-\frac{1}{2} (m(x)+\|\nabla f(x) \|) \|r(x) \| ^2, \label{1.24}
 \end{equation}
 otherwise $\inf f(x)= -\infty,$ which is impossible due to the boundedness of $\mathcal{L}_0$.
 
 From (\ref{1.21}) and (\ref{1.24}) we have
 $$(\nabla f(x+t(x)r(x))-\nabla f(x), r(x))\geq \frac{m(x)+\|\nabla f(x)\|}{2} \| r(x)\|^2.$$
 Therefore there exist $0<\theta (x) <1$ such that 
 $$ t(x) (\nabla^2 f(x+\theta (x) t(x) r(x)) , r(x) )=t(x)(\nabla^2f(\cdot)r(x),r(x))$$ $$\geq \frac{m(x)+\|\nabla f(x)\|}{2} \|r(x)\|^2$$
 or
 $$t(x) \| \nabla ^2 f(\cdot)\| \| r(x) \|^2 \geq \frac{m(x)+\|\nabla f(x)\| }{2} \|r(x) \|^2.$$
 Keeping in mind that $\|\nabla ^2 f(\cdot)\| \leq L$ we obtain 
 \begin{equation}
 t(x)\geq \frac{m(x) +\|\nabla f(x) \|}{2L}\geq \frac{\|\nabla f(x) \|}{2L}. \label{1.25}
 \end{equation} 
It means that for $t\leq\frac{\|\nabla f(x)\|}{2L}$ the inequality $$\frac{df(x+tr(x))}{dt}\leq -\frac{1}{2}(\nabla f(x),r(x))$$ holds, hence
 $$ \Delta f(x)=f(x) -f(x+t(x) r(x))\geq$$
 \begin{equation}
 \frac{1}{2} t(x) (-\nabla f(x), r(x) )= \frac{1}{2} t(x) \lambda _r^2 (x).\label {1.26}
 \end{equation}
 
 Therefore  finding the lower bound for  the decrease of $f$ at any $x\in \mathcal{L}_0$ such that $ x \notin B(x^*, \delta$) we have to find the corresponding    bound for the regularized Newton's decrement.

 Now let us   consider $x \in  B (x^*,\delta) $ then from \eqref{Eq5_12.3.16} follows
 \begin{equation}\label{Eq6_12.3.16}
 (\nabla f(x)-\nabla f(x^*), x-x^*)\geq m_0\|x-x^*\|^2.
 \end{equation}
 for any $x\in B(x^*,\delta)$.
 
 Let $\hat{x} \notin B(x^*, \delta)$, we consider a segment $[x^*, \hat{x}]$.
 There is $0< \widetilde{t}<1$ such that $ \widetilde{x}=(1-\widetilde{t})x^*+\widetilde{t}\hat{x} \in \partial B(x^*,\delta)$.
 
 From the convexity  $f$ follows 
 \begin{eqnarray*}
 (\nabla f(x^* +t(\hat{x}-x^*)),\hat{x}-x^*) |_{t=0} \leq (\nabla f (x^*+t(\hat{x}-x^*)), \hat{x}-x^*)|_{t=\widetilde{t}}\leq\\
  (\nabla f(x^*+t(\hat{x}-x^*),\hat{x}-x^*)|_{t=1},
 \end{eqnarray*}
 or
 $$0=(\nabla f(x^*), \hat{x}-x^*)\leq (\nabla f(\widetilde{x}), \hat{x}-x^*)\leq (\nabla f(\hat{x}), \hat{x}-x^*).$$
 The right inequality can be rewritten as follows:
 $$(\nabla f(\widetilde{x}), \hat{x}-x^*)= \frac{\|\hat{x}-x^*\|}{\delta}(\nabla f(\widetilde{x})-\nabla f(x^*), \widetilde{x}-x^*)\leq (\nabla f(\hat{x}), 
 \hat{x}-x^*).$$
 
 In view of (\ref{Eq6_12.3.16}) we obtain
 $$\|\nabla f(\hat{x})\|\|\hat{x}-x^*\| \geq \frac{\|\hat{x}-x^*\|}{\delta}(\nabla f(\widetilde{x})-f(x^*),\widetilde{x}-x^*)\geq \frac{\|\hat{x}-x^*\|}{\delta}m_0\|\widetilde{x}-x^*\|^2.$$
 Keeping in mind that $\widetilde{x} \in \partial B(x^*,\delta)$ we get 
 \begin{equation}
 \|\nabla f(\hat{x})\| \geq m_0\|\widetilde{x}-x^*\|=\frac{2}{3} m_0 m\frac{1}{M+2L}.\label{1.29}
 \end{equation}
 On the other hand from  \eqref{Eq4_5.2.16}  and $\hat{x}\in\mathcal{L}_0$ follows
\begin{equation}
\label{Eq_13.3.16}
\|\nabla f(\hat{x}) \|=\| \nabla f(\hat{x})-\nabla f(x^*)\|\leq L\|\hat{x}-x^*\|\leq Lr_0.
\end{equation}
From (\ref{Eq3_5.2.16}) follows 
\begin{equation}\label{Eq6_2.3.16}
\nabla  ^2 f(x)\preceq LI.
\end{equation}
For any $\hat{x}\notin S(x^*,\delta)$ we have $\|\nabla f(\hat{x})\|>0$, therefore  $H(\hat{x})=\nabla^2 f(\hat{x})+\|\nabla f(\hat{x})\|I$ is positive definite and system \eqref{1.6} has a unique solution 
$$r(\hat{x})=-H^{-1}(\hat{x})\nabla f(\hat{x}).$$
Moreover from (\ref{Eq6_2.3.16}) follows
$$(\nabla ^2 f(\hat{x})+\|\nabla f(\hat{x})\|I)\preceq (L+\|\nabla f(\hat{x})\|)I.$$
Therefore 
\begin{equation}
H^{-1}(\hat{x})\succeq (L+\|\nabla f(\hat{x})\|I)^{-1})I.
\end{equation}
For the regularized Newton's decrement we obtain 
\begin{equation}\label{Eq7_2.3.16}
\lambda_{(r)}(\hat{x})=(H^{-1}(x))\nabla f(\hat{x}), \nabla f(\hat{x}))^{0.5}\geq (L+\|\nabla f(\hat{x}\|)^{-0.5}\|\nabla f(\hat{x})\|.
\end{equation}
Keeping in mind 
$$\|\nabla f(\hat{x}\|=\|\nabla f(\hat{x})-\nabla f(x^*)\|\leq L\|\hat{x}-x^*\|$$
from (\ref{1.25}), \eqref{Eq_13.3.16} and (\ref{Eq7_2.3.16}) and definition of $r_0$ we obtain 
$$\Delta f(\hat{x})\geq \frac{1}{2} t(\hat{x})\lambda^2_r(\hat{x})\geq \frac{\|\nabla  f(\hat{x})\|^3}{4L}(L+\|\nabla f(\hat{x})\|)^{-1}\geq \frac{\|\nabla f(\hat{x})\|^3}{4L^2(1+r_0)}.$$
Using \eqref{1.29} we  get 
$$\Delta f(\hat{x})\geq \left( \frac{2}{3} m_0m\frac{1}{M+2L}\right)^3\frac{1}{4L^2(1+r_0)}$$ $$=\frac{2}{27}\frac{(m_0m)^3}{(M+2L)^3L^2}\frac{1}{(1+r_0)}.$$
Therefore  it takes 
$$ N_0=(f(\hat{x})-f(x^*))\Delta f^{-1}(\hat{x})=13.5\frac{(M+2L)^3L^2}{(m_0m)^3}(1+r_0)(f(\hat{x})-f(x^*))$$
steps to obtain $x\in \mathbb{B}(x^*,\delta)$ from any $x\in \mathcal{L}_0$.

 The proof of  Theorem  \ref{TNewt5} is completed.\qed
 \end{proof}
 From \eqref{1.14a} follows that it takes $O(\ln\ln \varepsilon^{-1})$ DRN steps to find an $\varepsilon$-approximation for $x^*$ from any $x\in \mathbb{B}(x^*,\delta)$. 
 
 Therefore the total number of DRN steps required for finding an $\varepsilon$-approximation for $x^*$ from a given starting point $x_0\in \Rn$ is 
 $$N=N_0+o(\ln\ln\varepsilon^{-1}).$$

 
 \section{Concluding Remarks}
 The bounds \eqref{Eq7_18.9.15} and \eqref{Eq2_19.2.16} depends on the size of Newton's and regularized Newton's areas, which, in turn, are defined by  convexity constant $m>0$ and smoothness constants $M>0$ and $L>0$. The convexity and smoothness constants dependent on the given  system of coordinate.
 
 Let consider an affine transformation of the original system  given by $x=Ay$, where $A\in \RR^{n\times n}$ is a nondegenerate matrix. We obtain  $\varphi (y)=f(Ay).$
 
 Let $\{x_s\}^{\infty}_{s=0}$ be the sequence generated by Newton's method
 $$x_{s+1}=x_s-(\nabla^2 f(x_s))^{-1}\nabla f(x_s).$$
 
 For the correspondent sequence in the transformed space we obtain
 $$y_{s+1}=y_s-(\nabla^2\varphi(y_s))^{-1}\nabla \varphi(y_s).$$
 Let $y_s=A^{-1}x_s$ for some $s\geq 0$, then $$y_{s+1}=y_s-(\nabla^2\varphi(y_s))^{-1}\nabla \varphi (y_s)=y_s-[A^T\nabla^2f(Ay_s)A]^{-1}A^T\nabla f(Ay_s)=$$
 $$A^{-1}x_s-A^{-1}(\nabla^2 f(x_s))^{-1}\nabla f(x_s)=A^{-1}x_{s+1}.$$
 It means that Newton's method is affine invariant with respect to the transformation $x=Ay$. Therefore the areas of quadratic convergence depends only on the local topology of $f$(see \cite{7}).
 
To get the Newton's sequence in the transformed space one needs to apply $A^{-1}$ to the elements of the Newton's  sequence in the original space.
 
 Let $N$ is such that $x_N:\|x_N-x^*\|\leq \varepsilon$, then   
 $$\|y_N-y^*\|\leq \|A^{-1}\| \| x_N-x^*\|.$$
 From \eqref{Newt15} follows
 $$\|x_{N+1}-x^*\|\leq \frac{M}{2(m-M\|x_s-x^*\|)}\|x_N-x^*\|^2.$$
 Therefore  
 $$\|y_{N+1}-y^*\|\leq \|A^{-1}\| \|x_{N+1}-x^*\|\leq \frac{1}{2}\|A^{-1}\|\frac{M}{(m-M\varepsilon)}\varepsilon^2.$$
 Hence, for small enough 
 $$\varepsilon \leq 0.5\frac{m}{M}\min \{1;(\|A^{-1}\|)^{-1}\}$$ we have 
 $$\|y_{N+1}-y^*\|\leq \varepsilon.$$

 We would like to emphasize that the bound \eqref{Eq2_19.2.16} is global, while the conditions \eqref{Eq_4.2.16} and \eqref{Eq2_4.2.16}  under which the bound holds are local, at the neighborhood of $x^*$.

\end{document}